\documentclass[11pt]{article}
\usepackage{amsmath, amssymb, amsthm}
\usepackage{booktabs}
\usepackage{geometry}
\usepackage{hyperref}
\usepackage{graphicx}
\usepackage{bm}
\newtheorem{theorem}{Theorem}[section]
\newtheorem{lemma}[theorem]{Lemma}
\newtheorem{proposition}[theorem]{Proposition}
\newtheorem{corollary}[theorem]{Corollary}
\newtheorem{definition}[theorem]{Definition}
\newtheorem{example}[theorem]{Example}
\newtheorem{remark}[theorem]{Remark}
\newtheorem{question}{Question}

\newcommand{\W}{\mathcal{W}}

\title{Cliques in graphs constructed from Strongly Orthogonal Subsets in exceptional root systems}
\author{Patrick J. Browne\footnote{Technical University of the Shannon (TUS). The author gratefully acknowledges support from the Centre for Pedagogical Innovation and Development (CPID) via SATLE funding at TUS.} \and Pádraig Ó Catháin\footnote{Údarás na Gaeltachta, Na Forbacha, Co. na Gaillimhe.}}
\date{April 2026}

\begin{document}
\maketitle

\begin{abstract}
Given a root system $R$, two roots are said to be \emph{strongly orthogonal} if neither their sum nor difference is a root. Gashi defined a family of graphs with vertices labelled by sums of $k$-element strongly orthogonal subsets of roots, and edges connect vertices whose difference is also a vertex. Gashi and the current authors established Erd\H{o}s--Ko--Rado type results for graphs developed from Type $A$ root systems. 

In this paper, we study graphs developed from the exceptional root systems $G_2$, $F_4$, $E_6$, $E_7$, and $E_8$. We compute graph-theoretic invariants including regularity, connectivity, and clique numbers, and analyze clique structures with respect to sunflower properties. The automorphism group contains the Weyl group; we use these symmetries to obtain complete counts of maximum cliques and maximum sunflowers. Unlike type $A$, where all maximal cliques are sunflowers for large rank, sunflower cliques comprise at most 11\% of maximum cliques in the simply-laced exceptional types $E_6$, $E_7$, and $E_8$.
\end{abstract}

\noindent\textbf{Keywords:} root systems, strongly orthogonal roots, Erd\H{o}s--Ko--Rado theorem, clique number, sunflower

\noindent\textbf{MSC2020:} 05C69, 05D05, 17B22

\section{Introduction}

The Erd\H{o}s--Ko--Rado (EKR) theorem is a cornerstone of extremal combinatorics, \cite{EKR}. It bounds the size of \emph{intersecting families}: collections $\mathcal{F}$ of $k$-subsets of an $n$-set such that any two members have nonempty intersection. When $n \geq 2k$, the theorem states that $|\mathcal{F}| \leq \binom{n-1}{k-1}$, with equality if and only if $\mathcal{F}$ consists of all $k$-subsets containing a fixed element. In the EKR literature, extremal configurations where all pairwise intersections contain a common set are called \emph{sunflowers}. 

Browne, Gashi, and \'O Cath\'ain developed an EKR-type result for root systems, \cite{BGO}. Given a root system $R$, they constructed graphs $\Gamma(R, k)$ whose vertices are sums of $k$-element strongly orthogonal subsets of roots, with edges when the difference of two vertices is itself a vertex. Their work focused on the type $A$ root system, establishing that for sufficiently large rank, all maximal cliques in $\Gamma(A_\ell, k)$ are sunflowers.

The present paper extends the investigation of EKR properties to the five exceptional type root systems: $G_2$, $F_4$, $E_6$, $E_7$, and $E_8$. We compute the standard graph parameters: numbers of edges, vertices, vertex degrees and sizes of connected components. We also prove that $\W(R) \leq \mathrm{Aut}(\Gamma(R, k))$ in all cases, where $\W(R)$ is the Weyl group of the root system. We compute the numbers of cliques and sunflower cliques, completing the EKR analysis. We observe that certain pairs of graphs are isomorphic, and identify an explicit isomorphism: under conditions that we specify precisely, the sum of $4t$ strongly orthogonal roots is equal to a sum of $t$ strongly orthogonal roots scaled by a factor of $2$. 

Section~\ref{sec:rootsystems} provides background on root systems, with complete descriptions of the exceptional cases. Section~\ref{sec:construction} defines the graphs $\Gamma(R, k)$ and presents their basic parameters. Section~\ref{sec:ekr} recalls the EKR framework developed previously Gashi and the authors, and analyzes clique structures in exceptional types. Section~\ref{sec:conclusion} discusses some open questions and gives remarks on computational methodology.

\section{Root Systems}\label{sec:rootsystems}

Work of Agaoka and Kaneda on symmetric spaces inspired Gashi to investigate combinatorial properties of root systems, \cite{Agaoka} . We give some basic properties of root systems; a standard reference is Chapter~9 of Humphreys' monograph, \cite{Humphreys}.

\begin{definition}\label{RootDefn}
Let $V$ be a finite-dimensional Euclidean space with inner product $\langle \cdot, \cdot \rangle$.
A \emph{root system} in $V$ is a finite set $R \subset V$ satisfying:
\begin{enumerate}
    \item $R$ spans $V$ and $0 \notin R$.
    \item If $\alpha \in R$, then $-\alpha \in R$ and no other scalar multiple of $\alpha$ lies in $R$.
    \item For all $\alpha, \beta \in R$, the reflection $s_\alpha(\beta) = \beta - 2\frac{\langle \beta, \alpha \rangle}{\langle \alpha, \alpha \rangle}\alpha$ lies in $R$.
    \item For all $\alpha, \beta \in R$, we have $2\frac{\langle \beta, \alpha \rangle}{\langle \alpha, \alpha \rangle} \in \mathbb{Z}$.
\end{enumerate}
The \emph{rank} of $R$ is $\dim V$. The \emph{Weyl group}, $\W(R)$, is the subgroup of $\mathrm{GL}(V)$ generated by the reflections $\{s_\alpha : \alpha \in R\}$. It is a finite group that permutes $R$ and acts linearly on $V$.
\end{definition}

A root system is \emph{irreducible} if it cannot be partitioned into two proper subsets such that each root in one set is orthogonal to every root in the other. Irreducible root systems are classified into four infinite families ($A_\ell$, $B_\ell$, $C_\ell$, $D_\ell$) and five exceptional types ($G_2$, $F_4$, $E_6$, $E_7$, $E_8$). An irreducible root system has at most two root lengths; an irreducible root system with a single root length is called \emph{simply laced}. Among the exceptional types, $G_2$ and $F_4$ are non-simply laced.

The \emph{Coxeter number} $h$ of an irreducible root system $R$ of rank $\ell$ is the order of a Coxeter element (the product of a set of base reflections in any order, i.e. reflections through a set of simple roots); it satisfies $|R| = \ell h$. The \emph{dual Coxeter number} $h^\vee$ is defined analogously using the coroot system $R^\vee = \{2\alpha/\|\alpha\|^2 : \alpha \in R\}$; for simply-laced systems, $h = h^\vee$.

A basic property of root systems is the following: if $\alpha$ and $\beta$ are roots with $\langle \alpha, \beta \rangle < 0$ and $\alpha \neq -\beta$, then $\alpha + \beta$ is a root. Equivalently, if $\langle \alpha, \beta \rangle > 0$ and $\alpha \neq \beta$, then $\alpha - \beta$ is a root. This follows from Axiom 4 of Definition \ref{RootDefn}.

\begin{definition}
Two roots $\alpha, \beta \in R$ are \emph{strongly orthogonal} if neither $\alpha + \beta$ nor $\alpha - \beta$ is a root. A \emph{strongly orthogonal subset} (SOS) is a set of pairwise strongly orthogonal roots.
\end{definition}

In the simply-laced case, orthogonality and strong orthogonality coincide: if $\langle \alpha, \beta \rangle = 0$ then $\|\alpha \pm \beta\|^2 > \|\alpha\|^{2}$, so neither $\alpha + \beta$ nor $\alpha - \beta$ is a root. As well as featuring in the work of Agaoka and Kaneda on symmetric spaces, strongly orthogonal roots were used by Burns and Pfeiffer to construct large abelian subgroups of Weyl groups, \cite{BurnsPfeiffer}. 

\begin{proposition}[\cite{Agaoka}]
In a root system of rank $\ell$, a SOS has size at most $\ell$.
\end{proposition}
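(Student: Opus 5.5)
The plan is to show that a strongly orthogonal subset consists of pairwise orthogonal, nonzero vectors in $V$. Such a set is linearly independent, so its size is at most $\dim V = \ell$.

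\emph{Step 1: strongly orthogonal roots are orthogonal.} Let $\alpha,\beta$ be distinct strongly orthogonal roots. First, $\beta \neq -\alpha$: otherwise $\alpha-\beta = 2\alpha$ would have to be a non-root, which is true by Axiom 2. So I instead use the sum: $\alpha+\beta = 0$ is not a root, which gives no contradiction. This case must be excluded differently. The clean way is to note that in a set of pairwise strongly orthogonal roots, the pair $\{\alpha,-\alpha\}$ does occur formally, since $\alpha+(-\alpha)=0\notin R$ and $\alpha-(-\alpha)=2\alpha\notin R$. So I will adopt the convention (implicit in the paper's use of SOS, where sums of SOS elements are taken) that an SOS does not contain both $\alpha$ and $-\alpha$. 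Equivalently, I bound the size after identifying $\pm$ pairs; the bound is on the number of distinct lines spanned, which is what the statement means.

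Now suppose $\beta\neq\pm\alpha$. If $\langle\alpha,\beta\rangle<0$, the basic property recalled just before the definition of strong orthogonality says that $\alpha+\beta$ is a root, contradicting strong orthogonality. If $\langle\alpha,\beta\rangle>0$, the same property says that $\alpha-\beta$ is a root, again a contradiction. Hence $\langle\alpha,\beta\rangle=0$.

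\emph{Step 2: linear independence.} Let $\{\gamma_1,\dots,\gamma_k\}$ be an SOS with no two elements negatives of each other. By Step 1 the vectors are pairwise orthogonal, and each is nonzero by Axiom 1. Suppose $\sum c_i\gamma_i=0$. Taking the inner product with $\gamma_j$ gives $c_j\|\gamma_j\|^2=0$, so $c_j=0$. Thus the set is linearly independent in $V$, and therefore $k\le\dim V=\ell$.

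The only delicate point is the $\pm\alpha$ issue in Step 1. Everything else follows directly from the basic property already stated in the paper, which is itself derived from Axiom 4. I expect no genuine obstacle beyond stating that convention clearly.
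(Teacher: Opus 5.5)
Your argument is correct. The paper gives no proof of its own here and simply cites Agaoka--Kaneda, but yours is the standard justification: for $\beta\neq\pm\alpha$ the basic property forces $\langle\alpha,\beta\rangle=0$, and pairwise orthogonal nonzero vectors are linearly independent, so there are at most $\ell$ of them.

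Your treatment of $\pm\alpha$ is not a matter of taste; the statement needs it. Under the paper's literal definition, $\alpha$ and $-\alpha$ are strongly orthogonal because $0,2\alpha\notin R$. Then, for example, the sixteen roots $\pm e_1\pm e_2,\ \pm e_3\pm e_4,\ \pm e_5\pm e_6,\ \pm e_7\pm e_8$ of $E_8$ are pairwise strongly orthogonal, which exceeds the rank $8$. So the bound requires the standard definition $\alpha\pm\beta\notin R\cup\{0\}$. The paper tacitly assumes this elsewhere, for instance when it asserts $\|v\|^2=2k$ for every SOS sum, and in the proof of Lemma~\ref{lem:mod8}.

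When you write this up, state that convention at the outset. Also delete the false start at the beginning of Step~1, where you first assert $\beta\neq-\alpha$ and then retract it.
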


This bound is achieved by all root systems of exceptional type, except $E_6$ which has maximum SOS size 4. 

\subsection{The Exceptional Root Systems}

We now describe each exceptional root system explicitly, since the sunflower property we describe will depend on the chosen basis. Let $\{e_1, e_2, \ldots, e_n\}$ denote the standard orthonormal basis of $\mathbb{R}^n$.

\boldsymbol{$G_2$} is the smallest exceptional system, with 12 roots in a 2-dimensional space. It is not simply laced. We realize it in $\mathbb{R}^3$ on the hyperplane $x_1 + x_2 + x_3 = 0$.
Roots satisfying $\|w\|^{2} = 2$ are called short roots; they are $\pm(e_i - e_j)$ for $1 \leq i < j \leq 3$. Roots satisfying $\|w\|^{2} = 6$ are called long roots; they are $\pm(2e_i - e_j - e_k)$ for $\{i,j,k\} = \{1,2,3\}$. The Weyl group $\W(G_2)$ is the dihedral group of order 12; the Coxeter number is $h = 6$. The maximum SOS size is 2.

\boldsymbol{$F_4$} is not simply laced, with 48 roots in $\mathbb{R}^4$. The 24 long roots are $\pm e_i \pm e_j$ for $1 \leq i < j \leq 4$, of norm 2. The 24 short roots are the 8 roots $\pm e_i$ for $1 \leq i \leq 4$ and the 16 roots $\frac{1}{2}(\pm e_1 \pm e_2 \pm e_3 \pm e_4)$, of norm $1$. The Weyl group $\W(F_4)$ has order 1152; the Coxeter number is $h = 12$. The maximum SOS size is 4. Note that since $e_1 - e_2$ is a long root, $e_1$ and $e_2$ are orthogonal but \textbf{not} strongly orthogonal. A maximal SOS consists of long roots, e.g. $\{e_1 + e_2,\ e_1 - e_2,\ e_3 + e_4,\ e_3 - e_4\}$. Sums and differences like $(e_1 + e_2) + (e_1 - e_2) = 2e_1$ have norm 4, hence are not roots.

\boldsymbol{$E_8$} is the largest exceptional system, with 240 roots in $\mathbb{R}^8$: the 112 roots $\pm e_i \pm e_j$ for $1 \leq i < j \leq 8$, and the 128 roots $\frac{1}{2}\sum_{i=1}^{8} \epsilon_i e_i$ where each $\epsilon_i = \pm 1$ and $\prod_{i=1}^{8} \epsilon_i = +1$ (even number of minus signs). All roots have norm $2$; this system is simply laced.

The Weyl group $\W(E_8)$ has order $696{,}729{,}600 = 2^{14} \cdot 3^5 \cdot 5^2 \cdot 7$; the Coxeter number is $h = 30$. The maximum SOS size is 8; for example:
\[
\{e_1 + e_2,\ e_1 - e_2,\ e_3 + e_4,\ e_3 - e_4,\ e_5 + e_6,\ e_5 - e_6,\ e_7 + e_8,\ e_7 - e_8\}.
\]

\boldsymbol{$E_7$} has 126 roots in a 7-dimensional subspace of $\mathbb{R}^8$. It consists of all $E_8$ roots orthogonal to a fixed root, conventionally $e_1 + e_8$:
\[
E_7 = \{\alpha \in E_8 : \langle \alpha, e_1 + e_8 \rangle = 0\}.
\]
The Weyl group $\W(E_7)$ has order $2{,}903{,}040 = 2^{10} \cdot 3^4 \cdot 5 \cdot 7$; the Coxeter number is $h = 18$. The maximum SOS size is 7.

\boldsymbol{$E_6$} has 72 roots in a 6-dimensional subspace of $\mathbb{R}^8$. It consists of the $E_8$ roots orthogonal to two roots $\alpha, \beta$ with $\langle \alpha, \beta \rangle = 1$. Taking $\alpha = e_1 + e_7$ and $\beta = e_1 + e_8$:
\[
E_6 = \{\gamma \in E_8 : \langle \gamma, e_1 + e_7 \rangle = \langle \gamma, e_1 + e_8 \rangle = 0\}.
\]
(Removing two orthogonal roots yields a $D_6$ subsystem of 60 roots.)

Explicitly: 40 roots are of the form $\pm e_i \pm e_j$ for $2 \leq i < j \leq 6$; and 32 roots are of the form $\frac{1}{2}\sum_{i=1}^{8} \epsilon_i e_i$ where $\epsilon_1 = -\epsilon_7 = -\epsilon_8$ and the total number of minus signs is even. The Weyl group $\W(E_6)$ has order $51{,}840 = 2^7 \cdot 3^4 \cdot 5$; the Coxeter number is $h = 12$. $E_6$ has an outer automorphism (from the Dynkin diagram symmetry) that doubles this to $103{,}680$.

The maximum SOS size in $E_6$ is 4, strictly less than the rank, as established by Agaoka and Kaneda, \cite{Agaoka}. A maximal SOS is:
$\{e_2 + e_3,\ e_2 - e_3,\ e_4 + e_5,\ e_4 - e_5\}$.

\section{The Graphs $\Gamma(R, k)$}\label{sec:construction}

\begin{definition}\label{Graphdef}
Let $R$ be a root system, and let $\mathrm{SOS}(R, k)$ be the set of all $k$-element strongly orthogonal subsets of $R$. The graph $\Gamma(R, k)$ has:
\begin{itemize}
    \item \textbf{Vertex set:} $\mathcal{V}(R, k) = \left\{\sum_{\alpha \in S} \alpha : S \in \mathrm{SOS}(R, k)\right\}$, the set of sums of $k$-element SOS.
    \item \textbf{Edge set:} $v_1 \sim v_2$ if and only if $v_1 - v_2 \in \mathcal{V}(R, k)$.
\end{itemize}
\end{definition}

Note that multiple SOS may yield the same sum, so $|\mathcal{V}(R,k)| \leq |\mathrm{SOS}(R,k)|$ in general. The next example gives a fully developed description of an SOS graph. 

\begin{figure}[ht]
\centering
\includegraphics[width=0.7\textwidth]{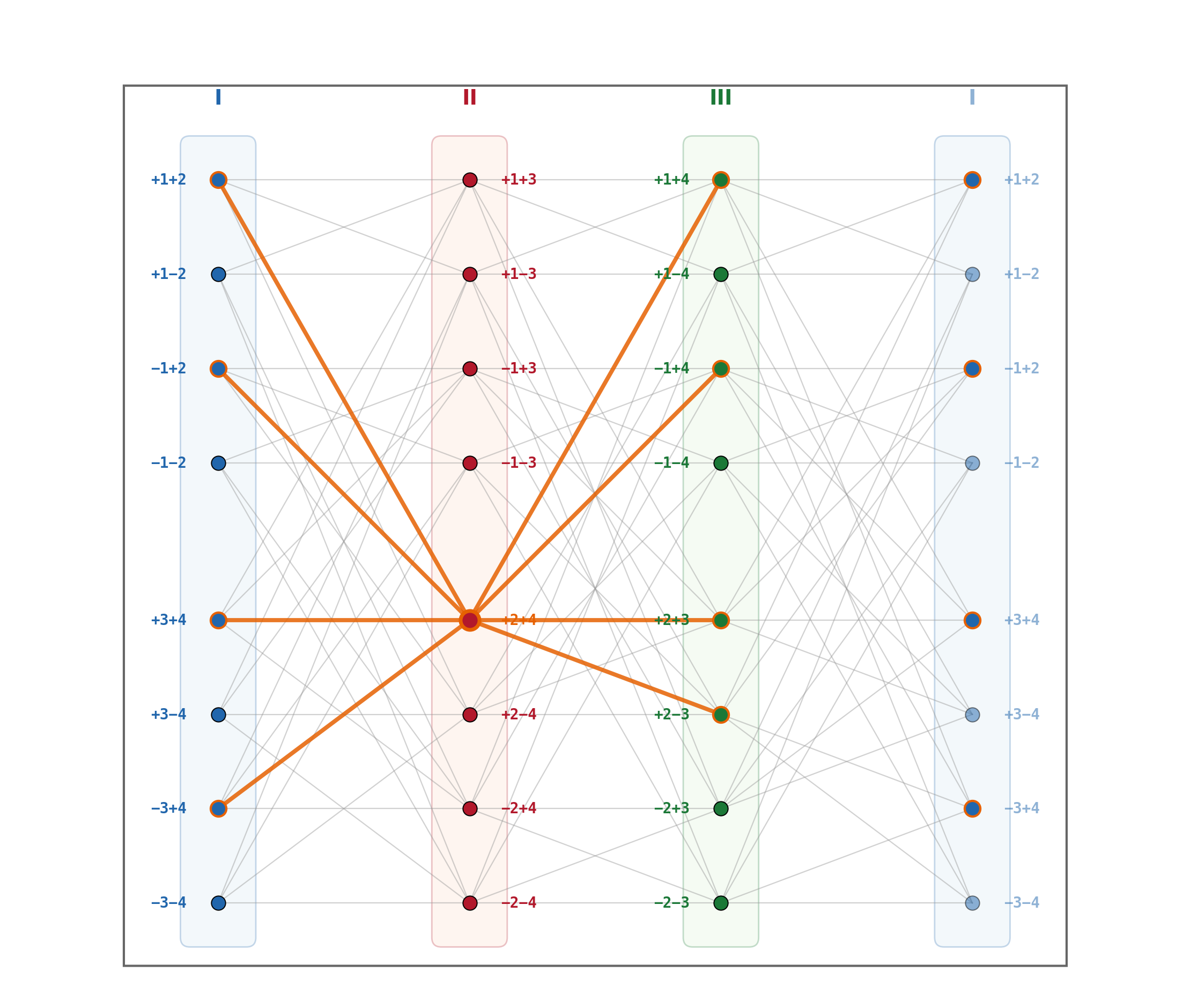}
\caption{The graph $\Gamma(F_4, 4)$.}
\label{fig:f4k4}
\end{figure}

\begin{example}[$\Gamma(F_4, 4)$]\label{ex:f4k4}
Consider three maximal strongly orthogonal subsets of $F_4$:
\begin{align*}
S_1 &= \{e_1 + e_3,\ e_1 - e_3,\ e_2 + e_4,\ e_2 - e_4\}, & v_1 &= \textstyle\sum S_1 = 2e_1 + 2e_2, \\
S_2 &= \{e_1 + e_2,\ e_1 - e_2,\ e_3 + e_4,\ e_3 - e_4\}, & v_2 &= \textstyle\sum S_2 = 2e_1 + 2e_3, \\
S_3 &= \{e_3 + e_1,\ e_3 - e_1,\ e_4 + e_2,\ e_4 - e_2\}, & v_3 &= \textstyle\sum S_3 = 2e_3 + 2e_4.
\end{align*}
It can be verified computationally that the sum of any SOS of maximal size is of the form $2(e_i - e_j)$; and that every such vector is the sum of an SOS. Hence $v_1 - v_2 = 2(e_2 - e_3)$, and so $v_1 \sim v_2$; but $v_1 - v_3 = 2e_1 + 2e_2 - 2e_3 - 2e_4$ and $v_1 \not\sim v_3$.

The vertices of $\Gamma(F_4, 4)$ are labelled by vectors with two non-zero entries. In Figure~\ref{fig:f4k4}, the vertex corresponding to $(0,2,0,2)$ is labelled $+2{+}4$ to indicate that the support is columns 2 and 4; and that the entries in those columns are positive. From Definition \ref{Graphdef}, two vertices are adjacent if and only if they share exactly one non-zero entry. The graph has independent sets labelled by the partitions of ${1,2,3,4}$ into parts of size 2. Part~I is repeated on both margins (identified) so that all edges appear as straight lines between adjacent columns. 

The full graph automorphism group is $\mathrm{Aut}(\Gamma(F_4, 4)) \cong \W(F_4) \cong S_3 \ltimes \W(D_4),$ of order $6 \times 192 = 1152$, verified using \texttt{nauty}~\cite{nauty}. The $S_3$ triality symmetry of the $D_4$ Dynkin diagram permutes the three parts. 
\end{example}

In fact, the vertices of $\Gamma(F_4, 4)$ are a copy of the $D_{4}$ root system scaled by a factor of $2$, and $\Gamma(F_4, 4) \cong \Gamma(D_4, 1)$. Later we will see further isomorphisms of this type.

\subsection{Lemmas on the graphs $\Gamma(R, k)$}

In this section we prove some straightforward Lemmas which will be used later to reduce the size of certain computations.

\begin{lemma}\label{lem:weyl}
For any root system $R$ and any positive integer $k$, the Weyl group $\W(R)$ acts as a group of automorphisms of $\Gamma(R, k)$.
\end{lemma}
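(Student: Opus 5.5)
The plan is to show that each $w \in \W(R)$ maps the vertex set $\mathcal{V}(R,k)$ bijectively onto itself. Linearity of $w$ then makes adjacency automatically preserved. All of this follows from two facts recorded in Definition~\ref{RootDefn}: every $w\in\W(R)$ is a linear map of $V$, and it permutes $R$.

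\emph{Step 1: $w$ preserves strong orthogonality.} Take roots $\alpha,\beta$. Then $w(\alpha\pm\beta)=w(\alpha)\pm w(\beta)$. Since $w$ and $w^{-1}$ both permute $R$, the vector $\alpha\pm\beta$ is a root exactly when $w(\alpha)\pm w(\beta)$ is a root. So $\alpha,\beta$ are strongly orthogonal if and only if $w(\alpha),w(\beta)$ are. Because $w$ is injective on $R$, it sends a $k$-element SOS $S$ to a $k$-element SOS $w(S)$. Hence $w$ permutes $\mathrm{SOS}(R,k)$, with inverse induced by $w^{-1}$.

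\emph{Step 2: $w$ permutes the vertices.} By linearity, $w\bigl(\sum_{\alpha\in S}\alpha\bigr)=\sum_{\alpha\in S}w(\alpha)=\sum_{\beta\in w(S)}\beta$, so $w(\mathcal{V}(R,k))\subseteq\mathcal{V}(R,k)$. Since $w$ is injective on $V$ and $\mathcal{V}(R,k)$ is finite, $w$ restricts to a bijection of $\mathcal{V}(R,k)$. Note that the argument never needs to pick a particular SOS representing a vertex: different SOS with the same sum are sent to the same image, simply because $w$ is a function on $V$.

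\emph{Step 3: adjacency is preserved.} For vertices $v_1,v_2$ we have $w(v_1)-w(v_2)=w(v_1-v_2)$. By Step~2, applied to both $w$ and $w^{-1}$, this lies in $\mathcal{V}(R,k)$ if and only if $v_1-v_2$ does. So $v_1\sim v_2 \iff w(v_1)\sim w(v_2)$, and each $w$ acts as a graph automorphism. The map $\W(R)\to\mathrm{Aut}(\Gamma(R,k))$ is a restriction of the given action, so it is a homomorphism.

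There is no real obstacle here. The only point needing care is that Step~1 requires the equivalence in both directions, so one must use that $w^{-1}$ also permutes $R$. The lemma claims only that $\W(R)$ acts; faithfulness is not needed. If faithfulness were wanted, one would argue that $\mathcal{V}(R,k)$ spans $V$, for instance because it is a nonempty $\W(R)$-stable set in an irreducible representation.
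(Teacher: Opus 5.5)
Your proof is correct and follows essentially the same route as the paper's: $w$ sends strongly orthogonal $k$-sets to strongly orthogonal $k$-sets, linearity sends vertex sums to vertex sums, and $w(v_1)-w(v_2)=w(v_1-v_2)$ transfers adjacency. You are somewhat more careful than the paper. You justify explicitly that strong orthogonality is preserved, and you invoke $w^{-1}$ for bijectivity and for the reverse implication in the adjacency equivalence, both of which the paper leaves implicit.
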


\begin{proof}
Let $w \in \W(R)$. Since $w$ permutes the roots of $R$, it sends any $k$-element strongly orthogonal subset $S$ to another such subset $wS = \{w\alpha : \alpha \in S\}$. By linearity,
\[
\sum_{\alpha \in wS} \alpha = \sum_{\alpha \in S} w\alpha = w\!\left(\sum_{\alpha \in S} \alpha\right)\!,
\]
so $w$ maps $\mathcal{V}(R, k)$ to itself. Moreover, if two distinct SOS yield the same vertex sum, $\sum_{\alpha \in S} \alpha = \sum_{\alpha \in S'} \alpha$, then linearity gives $\sum_{\alpha \in wS} \alpha = \sum_{\alpha \in wS'} \alpha$, so $w$ is well-defined on $\mathcal{V}(R,k)$. For vertices $v_1, v_2 \in \mathcal{V}(R, k)$, linearity gives $w(v_1) - w(v_2) = w(v_1 - v_2)$, so $v_1 - v_2 \in \mathcal{V}(R, k)$ if and only if $w(v_1) - w(v_2) \in \mathcal{V}(R, k)$. Thus $w$ preserves the edge relation.
\end{proof}

For a simply-laced root system, the sum $v = \sum_{\alpha \in S} \alpha$ of any $S \in \mathrm{SOS}(R, k)$ satisfies $\|v\|^2 = 2k$, and all vertices of $\Gamma(R, k)$ lie on a sphere of radius $\sqrt{2k}$. When $k$ equals the rank of $R$, a stronger constraint applies.

\begin{lemma}\label{lem:mod8}
Let $R$ be a simply-laced root system of rank $\ell$, and suppose $k = \ell$. Then for any two vertices $v, w \in \mathcal{V}(R, k)$, we have $\|v - w\|^2 \equiv 0 \pmod{8}$.
\end{lemma}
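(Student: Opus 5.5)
The plan is to write both vertices in terms of their underlying strongly orthogonal subsets and reduce the congruence to a parity statement about an orthogonal matrix. Let $v = \sum_{i=1}^{\ell} \alpha_i$ and $w = \sum_{j=1}^{\ell} \beta_j$ with $S=\{\alpha_i\}$ and $T=\{\beta_j\}$ in $\mathrm{SOS}(R,\ell)$.

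\textbf{Reduction.} In the simply-laced case strong orthogonality is ordinary orthogonality, and all roots have norm $2$. So $S$ consists of $\ell$ mutually orthogonal vectors of norm $2$ in the $\ell$-dimensional space $V$. Hence $\{\alpha_i/\sqrt{2}\}$ is an orthonormal basis of $V$, and likewise $\{\beta_j/\sqrt 2\}$. We have $\|v\|^2=\|w\|^2=2\ell$, so
\[
\|v-w\|^2 = 4\ell - 2\langle v,w\rangle .
\]
Define $c_{ji} = \tfrac12\langle \beta_j,\alpha_i\rangle$. Then $\langle v,w\rangle = 2\sum_{i,j} c_{ji}$, and the claim becomes
\[
\sum_{i,j} c_{ji} \equiv \ell \pmod 2 .
\]

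\textbf{Structure of $M=(c_{ji})$.} The matrix $M$ is the change-of-basis matrix between two orthonormal bases, so it is orthogonal. Its entries are constrained as follows.
\begin{itemize}
\item In a simply-laced system $\langle \beta,\alpha\rangle \in \{0,\pm1,\pm2\}$, so $c_{ji}\in\{0,\pm\tfrac12,\pm1\}$.
\item Each row satisfies $\sum_i c_{ji}^2 = \|\beta_j\|^2/2 = 1$.
\end{itemize}
Therefore each row is either a single entry $\pm1$ with the rest zero, or exactly four entries $\pm\tfrac12$ with the rest zero. In either case the row sum $r_j=\sum_i c_{ji}$ is an integer. In the first case $r_j=\pm1$; in the second, $r_j$ is half of a sum of four $\pm1$'s, so $r_j\in\{0,\pm1,\pm2\}$.

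\textbf{Parity trick.} Let $\mathbf 1$ be the all-ones vector. Then $M\mathbf 1 = (r_1,\dots,r_\ell)^{T}$, and orthogonality of $M$ gives
\[
\sum_j r_j^2 = \|M\mathbf 1\|^2 = \|\mathbf 1\|^2 = \ell .
\]
Since the $r_j$ are integers, $r_j^2\equiv r_j \pmod 2$. Hence
\[
\sum_{i,j}c_{ji} = \sum_j r_j \equiv \sum_j r_j^2 = \ell \pmod 2,
\]
which is exactly the reduced claim, so $\|v-w\|^2\equiv 0 \pmod 8$.

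The step that needs the most care is the entry classification: that each row sum $r_j$ is an integer. This uses both the simply-laced bound $|\langle\beta,\alpha\rangle|\le 2$ and the hypothesis $k=\ell$, which makes $S$ a full orthogonal basis so that the squared coefficients of $\beta_j$ sum to $1$. Once integrality is established, the orthogonality of $M$ finishes the argument immediately.
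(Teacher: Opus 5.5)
Your proof is correct and is essentially the paper's argument. Your $M$ is $\tfrac12 C^{\top}$ for the paper's matrix $C$, and your row classification, integrality of the row sums $r_j=\tfrac12\langle\beta_j,v\rangle$, and parity step $r_j^2\equiv r_j$ applied to $\|M\mathbf 1\|^2=\ell$ mirror the paper's use of $s=\tfrac12 C\mathbf 1$ with $\|s\|^2=\ell$, with the roles of the two subsets swapped (your normalisation to an orthogonal matrix also makes the identity $C^{\top}C=CC^{\top}=4I_\ell$, left by the paper as ``a computation'', immediate).
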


\begin{proof}
Write $v = \sum_{i=1}^{\ell} \alpha_i$ and $w = \sum_{j=1}^{\ell} \beta_j$, where $\{\alpha_i\}$ and $\{\beta_j\}$ are $\ell$-element strongly orthogonal subsets. Since the $\alpha_i$ are pairwise orthogonal vectors of norm $2$ in a space of dimension $\ell$, they satisfy $\sum_{i=1}^{\ell} \langle \alpha_{i}, v\rangle v = 2v$ for any vector $v$; and similarly for $\{\beta_j\}$. By Axiom 4 of Definition \ref{RootDefn}, the entries of the matrix  $C$ given by $c_{ij} = \langle \alpha_i, \beta_j \rangle$ are integers. A computation shows that $C^\top C = 4 I_\ell$, and that $CC^\top = 4I_\ell$. 
So the rows of $C$ have squared entries summing to $4$: each row contains either a single non-zero entry $\pm 2$, or four non-zero entries $\pm 1$; in all cases the row sum is even.

Let $\mathbf{1} = (1, \ldots, 1)^\top$; and observe that the vector $s = \frac{1}{2}C\mathbf{1}$ has integer entries. Now
\[
\langle v, w \rangle = \mathbf{1}^\top C \mathbf{1} = 2 \sum_{i} s_{i},
\]
From the definitions of $s$ and $C$, we deduce that $\|s\|^2 = \tfrac{1}{4}\mathbf{1}^\top C^\top C \mathbf{1} = \tfrac{1}{4} \cdot 4\ell = \ell$. 
Since $n^2 \equiv n \pmod{2}$ for any integer $n$, we obtain $\sum_i s_i \equiv \sum_i s_i^2 = \ell \pmod{2}$. Therefore $\langle v, w \rangle = 2\sum_i s_i \equiv 2\ell \pmod{4}$, and
\[
\|v - w\|^2 = \|v\|^2 + \|w\|^2 - 2\langle v, w \rangle = 4\ell - 2\langle v, w \rangle \equiv 4\ell - 4\ell = 0 \pmod{8}. \qedhere
\]
\end{proof}

\begin{corollary}\label{cor:e7edgeless}
The graph $\Gamma(E_7, 7)$ has no edges.
\end{corollary}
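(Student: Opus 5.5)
The plan is to deduce the corollary directly from Lemma~\ref{lem:mod8}, together with the norm computation for vertices of a simply-laced graph. $E_7$ is simply laced of rank $\ell = 7$, so Lemma~\ref{lem:mod8} applies with $k = 7$. For any two vertices $v, w \in \mathcal{V}(E_7, 7)$ it gives $\|v - w\|^2 \equiv 0 \pmod 8$.

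Next I will compute the norm of every vertex. As noted before Lemma~\ref{lem:mod8}, every vertex $u = \sum_{\alpha\in S}\alpha$ with $S \in \mathrm{SOS}(E_7,7)$ satisfies $\|u\|^2 = 2k = 14$. The reason is that the seven roots in $S$ are pairwise orthogonal and each has norm $2$.

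Now suppose $v \sim w$. By Definition~\ref{Graphdef}, $v - w \in \mathcal{V}(E_7,7)$, so $\|v-w\|^2 = 14$. Since $14 \equiv 6 \pmod 8$, this contradicts Lemma~\ref{lem:mod8}. Hence there are no edges.

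Two points need care. Loops do not arise: $v - v = 0$ is not a vertex, because every vertex has norm $14$. The vertex set is also nonempty, since the maximum SOS size in $E_7$ is $7$, although nonemptiness is not needed for the argument. The only real obstacle is already handled by Lemma~\ref{lem:mod8}, so this step is routine.
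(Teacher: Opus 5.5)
Your proposal is correct and follows essentially the same argument as the paper: Lemma~\ref{lem:mod8} forces $\|v-w\|^2 \equiv 0 \pmod 8$, while any vertex of $\Gamma(E_7,7)$ has norm $2k = 14 \equiv 6 \pmod 8$, so $v - w$ can never be a vertex. Your extra remark ruling out loops is also correct, since $0$ has norm $0 \neq 14$.
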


\begin{proof}
By Lemma~\ref{lem:mod8}, every pairwise difference satisfies $\|v - w\|^2 \equiv 0 \pmod 8$. For $v - w$ to be a vertex requires $\|v - w\|^2 = 14$, but $14 \equiv 6 \pmod 8$, a contradiction.
\end{proof}

The following lemma allows efficient computation of the total number of maximal cliques in a vertex-transitive graph from a single neighbourhood.

\begin{lemma}\label{lem:nbr-count}
Let $\Gamma$ be a vertex-transitive graph on $n$ vertices in which every maximal clique has size $s$. Let $v$ be any vertex, and let $\Gamma[N(v)]$ denote the subgraph of $\Gamma$ induced on the neighbourhood of $v$. Then every maximal clique of $\Gamma[N(v)]$ has size $s-1$, and the total number of maximal cliques in $\Gamma$ is
\[
\frac{n \cdot c(v)}{s}\,,
\]
where $c(v)$ denotes the number of maximal cliques in $\Gamma[N(v)]$.
\end{lemma}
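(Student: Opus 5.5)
The plan is to prove the two claims in order: first the statement about cliques of $\Gamma[N(v)]$, then the count, by double counting.

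For the first claim, the key step is a bijection between maximal cliques of $\Gamma[N(v)]$ and maximal cliques of $\Gamma$ containing $v$. In one direction, let $K$ be a clique of $\Gamma$ with $v \in K$. Every other vertex of $K$ is adjacent to $v$, so $K \setminus \{v\}$ is a clique in $\Gamma[N(v)]$. In the other direction, let $L$ be a clique of $\Gamma[N(v)]$. Every vertex of $L$ is adjacent to $v$, and $v \notin N(v)$ since the graph is simple, so $L \cup \{v\}$ is a clique of $\Gamma$.

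These maps are mutually inverse and preserve inclusion. Hence $L$ is maximal in $\Gamma[N(v)]$ exactly when $L\cup\{v\}$ is maximal in $\Gamma$. To check this, suppose $L \cup \{v\} \subsetneq K'$ for some clique $K'$. Then $K'$ contains $v$, so $K'\setminus\{v\}$ is a clique of $\Gamma[N(v)]$ strictly containing $L$. The converse direction is the same argument run backwards. Since every maximal clique of $\Gamma$ has size $s$, every maximal clique of $\Gamma[N(v)]$ has size $s-1$. This also shows that $c(v)$ equals the number of maximal cliques of $\Gamma$ through $v$.

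For the count, I would double count the incidence pairs $(u, K)$ with $K$ a maximal clique of $\Gamma$ and $u \in K$. Counting by cliques gives $M \cdot s$, where $M$ is the total number of maximal cliques. Counting by vertices gives $\sum_{u} c(u)$. Vertex-transitivity makes $c(u)$ constant: an automorphism $g$ with $g(v)=u$ carries maximal cliques through $v$ bijectively onto maximal cliques through $u$, since automorphisms preserve cliques and maximality. So $\sum_u c(u) = n\,c(v)$, and therefore $M = n\,c(v)/s$.

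There is no real obstacle here. The one point needing care is that maximality transfers in both directions under $K \mapsto K\setminus\{v\}$. It is also worth noting the degenerate case: if $\Gamma$ has no edges, then $s=1$, $\Gamma[N(v)]$ is empty, and its only maximal clique is the empty one. Then $c(v)=1$ and the formula still gives $n$.
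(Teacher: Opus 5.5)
Your proposal is correct and follows the paper's proof essentially step for step. Both use the bijection $K \mapsto K\setminus\{v\}$ between maximal cliques of $\Gamma$ through $v$ and maximal cliques of $\Gamma[N(v)]$, then double count vertex--clique incidences to get $sM = n\,c(v)$; you are slightly more explicit than the paper about why vertex-transitivity makes $c(u)$ constant and about deducing the size $s-1$ from the bijection.
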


\begin{proof}
Every maximal clique $C$ in $\Gamma$ has size $s$, and for each $v \in C$ the set $C \setminus \{v\}$ is a clique of size $s-1$ in $\Gamma[N(v)]$. This clique is maximal in $\Gamma[N(v)]$: any extension by a vertex $u \in N(v) \setminus C$ would give a clique $C \cup \{u\}$ in $\Gamma$, contradicting the maximality of $C$. Conversely, if $C'$ is a maximal clique in $\Gamma[N(v)]$, then $C' \cup \{v\}$ is a clique in $\Gamma$; it is maximal since any extension by $u \notin N(v)$ is impossible, and any extension by $u \in N(v) \setminus C'$ contradicts maximality of $C'$ in $\Gamma[N(v)]$. This gives a bijection between maximal cliques of $\Gamma$ containing $v$ and maximal cliques of $\Gamma[N(v)]$, so $v$ lies in exactly $c(v)$ maximal cliques of $\Gamma$.

Since $\Gamma$ is vertex-transitive, every vertex lies in the same number $c(v)$ of maximal cliques. Counting incidences between vertices and maximal cliques: each of the $M$ cliques contributes $s$ incidences, and each of the $n$ vertices contributes $c(v)$, so $s \cdot M = n \cdot c(v)$.
\end{proof}

\begin{remark}\label{rem:two-orbit}
When $\Gamma$ has two vertex orbits $\mathcal{O}_1$ and $\mathcal{O}_2$ under $\W(R)$, with $|\mathcal{O}_i| = n_i$ and each vertex in $\mathcal{O}_i$ lying in $c_i$ maximum cliques of size $s$, a similar argument to that of Lemma \ref{lem:nbr-count} gives
\[
M = \frac{n_1 c_1 + n_2 c_2}{s}.
\]
This applies to $\Gamma(E_7, 4)$, and $\Gamma(E_8, 4)$, each of which splits into two Weyl orbits with distinct degrees (see Table~\ref{tab:properties}).
\end{remark}

We establish a closed-form degree formula for $k = 1$ below; a formula for $k > 1$ remains open. No originality is claimed in the second half of this proof; it is a well-known computation in the theory of root systems, included here for completeness. 

\begin{proposition}\label{prop:k1degree}
Let $R$ be an irreducible simply-laced root system of rank $\ell$ with Coxeter number $h$. Then $\Gamma(R, 1)$ is regular of degree $2(h - 2)$.
\end{proposition}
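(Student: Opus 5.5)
For $k=1$ the vertices of $\Gamma(R,1)$ are exactly the roots: a one-element set is trivially strongly orthogonal, and distinct singletons give distinct sums. Two roots $\alpha,\beta$ are therefore adjacent if and only if $\alpha-\beta\in R$. By Lemma~\ref{lem:weyl} the Weyl group acts on $\Gamma(R,1)$ by automorphisms. In an irreducible simply-laced system all roots have the same length and $\W(R)$ acts transitively on $R$, so the graph is vertex-transitive and in particular regular. It then suffices to compute the degree of one root $\alpha$, namely
\[
\#\{\beta\in R:\ \alpha-\beta\in R\}.
\]

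Take $\alpha\neq\beta$ with all roots of norm $2$. Then $\|\alpha-\beta\|^2=4-2\langle\alpha,\beta\rangle$, and this equals $2$ exactly when $\langle\alpha,\beta\rangle=1$. The converse holds by the basic property quoted before the definition of strong orthogonality: if $\langle\alpha,\beta\rangle>0$ and $\alpha\ne\beta$, then $\alpha-\beta$ is a root. So the degree of $\alpha$ equals
\[
N_1(\alpha)=\#\{\beta\in R:\langle\alpha,\beta\rangle=1\}.
\]
The map $\beta\mapsto-\beta$ is a bijection onto $\{\beta:\langle\alpha,\beta\rangle=-1\}$, so the two sets have the same size. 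The remaining values of $\langle\alpha,\beta\rangle$ are $\pm2$ (attained only at $\beta=\pm\alpha$) and $0$.

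To compute $N_1(\alpha)$, I will use the standard identity
\[
\sum_{\beta\in R}\langle\beta,x\rangle^2=2h\,\|x\|^2
\]
for simply-laced $R$ normalised so that roots have norm $2$. To justify it, note that the left side is a $\W(R)$-invariant quadratic form. Since $\W(R)$ acts irreducibly on $V$ (because $R$ is irreducible), the form is a scalar multiple $c\|x\|^2$. Taking the trace over an orthonormal basis gives $c\,\ell=\sum_\beta\|\beta\|^2=2|R|=2\ell h$, using $|R|=\ell h$, and hence $c=2h$.

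Now evaluate at $x=\alpha$. The left side is $2\cdot 4+2N_1(\alpha)\cdot 1$, the first term coming from $\beta=\pm\alpha$ and the second from the two sets with inner product $\pm1$. The right side is $2h\cdot 2=4h$. Solving $8+2N_1(\alpha)=4h$ gives $N_1(\alpha)=2h-4=2(h-2)$, as claimed.

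The one step needing care is the invariant-form identity, specifically the irreducibility of the $\W(R)$-action, which is a standard consequence of $R$ being irreducible. As a check, in $A_\ell$ with $h=\ell+1$ the root $e_1-e_2$ has $2(\ell-1)$ neighbours, which agrees with the formula.
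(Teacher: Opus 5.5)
Your proof is correct and follows essentially the same route as the paper. Both arguments get vertex-transitivity from the Weyl group and identify the degree with the number of roots at inner product $1$. Both then evaluate the $\W(R)$-invariant form $\sum_{\beta\in R}\langle x,\beta\rangle^2 = \frac{2|R|}{\ell}\|x\|^2$, obtained from Schur's lemma and a trace computation, at $x=\alpha$, and use $|R|=\ell h$.
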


\begin{proof}
Since $R$ is simply-laced, all roots have the same norm $\|\alpha\|^2 = 2$, and $\W(R)$ acts transitively on $\mathcal{V}(R, 1)$, which is just the set of roots. Vertex-transitive graphs are regular.

The degree of vertex $\alpha$ is the number of roots $\beta$ such that $\alpha-\beta$ is a root; in a simply laced system, this is equal to the number of roots with $\langle \alpha, \beta \rangle = 1$. The sum $Q(x) = \sum_{\beta \in R} \langle x, \beta \rangle^2$ defines a positive-definite quadratic form on $V$ that is invariant under $\W(R)$. Since $R$ is irreducible, $\W(R)$ acts irreducibly on $V$, so by Schur's lemma $Q(x) = c\|x\|^2$ for some constant $c > 0$. Taking the trace over any orthonormal basis $\{f_i\}$ of $V$:
\[
\sum_{i=1}^\ell Q(f_i) = \sum_{\beta \in R} \|\beta\|^2 = 2|R|, \qquad \sum_{i=1}^\ell c\|f_i\|^2 = c\ell,
\]
so $c = 2|R|/\ell$ and $\sum_{\beta \in R} \langle x, \beta \rangle^2 = \frac{2|R|}{\ell}\|x\|^2$ for all $x \in V$.
Setting $x = \alpha$ and decomposing the left-hand side by the value of $\langle \alpha, \beta \rangle$:
\[
\underbrace{4 + 4}_{\beta = \pm\alpha} + \underbrace{d \cdot 1 + d \cdot 1}_{\langle \alpha, \beta \rangle = \pm 1} = \frac{2|R|}{\ell} \cdot 2,
\]
where $d = |\{\beta \in R : \langle \alpha, \beta \rangle = 1\}| = |\{\beta \in R : \langle \alpha, \beta \rangle = -1\}|$, the latter equality holding since $\beta \mapsto -\beta$ is a bijection. Solving gives $d = 2|R|/\ell - 4$, and since $|R| = \ell h$ for any irreducible root system \cite{Humphreys}, we obtain $d = 2(h - 2)$.
\end{proof}

Computations show that $\Gamma(R, k)$ is regular for all simply-laced exceptional $R$ and all $k$, with three exceptions: $(E_7, 3)$, $(E_7, 4)$, and $(E_8, 4)$. In each case the vertex set splits into exactly two Weyl orbits with distinct degrees. For $k = 4$, the smaller orbit is $\{2\alpha : \alpha \in R\}$---these vertices arise from strongly orthogonal subsets that ``pair up'' as $\{e_i + e_j,\, e_i - e_j,\, e_k + e_l,\, e_k - e_l\}$, whose sum is $2(e_i + e_k)$, twice a root. This orbit has $|R|$ elements (126 for $E_7$, 240 for $E_8$) and strictly higher degree than the generic orbit. For $\Gamma(E_7, 3)$, the smaller orbit consists of 56 isolated vertices.

\subsection{Computation of graphs} 

The graphs were constructed by generating all roots of $R$ using the explicit descriptions above, and finding all $k$-cliques in the ``strong orthogonality graph'' (vertices are roots, edges connect strongly orthogonal pairs) to construct $\mathrm{SOS}(R,k)$. An element of this set is a $k$-set of vectors, taking their sum gives $\mathcal{V}(R,k)$; and edges on this set are constructed by considering pairwise differences. The graph $\Gamma(E_8, 6)$ required computing $\binom{60480}{2} \approx 1.8 \times 10^9$ pair tests; simple but explicit memory management was required; we computed edges in batches and completed most tasks by streaming edges rather loading them all into memory simultaneously. Code was written in Python, calling nauty and cliquer (via SageMath) when required, \cite{nauty, cliquer}. The computational results described were established separately by the authors; the second author then used Claude Code (Anthropic's AI coding assistant) as a verification tool; which successfully recovered all computational results. The AI assistant developed relatively straightforward code rapidly, e.g. to compute the graphs $\Gamma(R, k)$, and to call tools such as cliquer to compute clique numbers. It suggested the key steps in the proof of Proposition 3.8, which was then refined by the authors. On the other hand it required multiple attempts and repeated corrections to implement Lemma \ref{lem:sf-perm}, producing plausible but invalid proofs and counts along the way. 

\subsection{Graph Parameters}

Table~\ref{tab:properties} presents the complete data. Here $|V|$ and $|E|$ are vertex and edge counts, $\delta$ and $\Delta$ are minimum and maximum degrees, and ``Comp.'' is the number of connected components.

\begin{table}[h]
\centering
\begin{tabular}{llrrrrrll}
\toprule
System & $k$ & $|V|$ & $|E|$ & min deg & max deg & CC & $\mathrm{Aut}(\Gamma)$ & Notes \\
\midrule
$G_2$ & 1 & 12 & 30 & 4 & 6 & 1 & $\W(G_2)$ & \\
$G_2$ & 2 & 6 & 6 & 2 & 2 & 1 & $\W(G_2)$ & regular \\
\midrule
$F_4$ & 1 & 48 & 408 & 14 & 20 & 1 & $\W(F_4)$ & \\
$F_4$ & 2 & 120 & 1200 & 20 & 20 & 1 & $\W(F_4)$ & regular \\
$F_4$ & 3 & 240 & 3552 & 26 & 32 & 1 & $\W(F_4)$ & \\
$F_4$ & 4 & 24 & 96 & 8 & 8 & 1 & $\W(F_4)$ & regular \\
\midrule
$E_6$ & 1 & 72 & 720 & 20 & 20 & 1 & $\W(E_6){:}2$ & regular \\
$E_6$ & 2 & 270 & 4590 & 34 & 34 & 1 & $\W(E_6){:}2$ & regular \\
$E_6$ & 3 & 720 & 26640 & 74 & 74 & 1 & $\W(E_6){:}2$ & regular \\
$E_6$ & 4 & 72 & 720 & 20 & 20 & 1 & $\W(E_6){:}2$ & $\cong \Gamma(E_6, 1)$ \\
\midrule
$E_7$ & 1 & 126 & 2016 & 32 & 32 & 1 & $\W(E_7)$ & regular \\
$E_7$ & 2 & 756 & 37800 & 100 & 100 & 1 & $\W(E_7)$ & regular \\
$E_7$ & 3 & 2072 & 183456 & 0 & 182 & 57 & $S_{56} \times \W(E_7)$ & \\
$E_7$ & 4 & 4158 & 582624 & 272 & 544 & 1 & $\W(E_7)$ & \\
$E_7$ & 5 & 7560 & 1572480 & 416 & 416 & 1 & $\W(E_7)$ & regular \\
$E_7$ & 6 & 10080 & 1844640 & 366 & 366 & 1 & $\W(E_7)$ & regular \\
$E_7$ & 7 & 576 & 0 & 0 & 0 & 576 & $S_{576}$ & edgeless \\
\midrule
$E_8$ & 1 & 240 & 6720 & 56 & 56 & 1 & $\W(E_8)$ & regular \\
$E_8$ & 2 & 2160 & 302400 & 280 & 280 & 1 & $\W(E_8)$ & regular \\
$E_8$ & 3 & 6720 & 1821120 & 542 & 542 & 1 & $\W(E_8)$ & regular \\
$E_8$ & 4 & 17520 & 10409280 & 1176 & 2072 & 1 & $\W(E_8) \leq \mathrm{Aut}$ & \\
$E_8$ & 5 & 30240 & 22014720 & 1456 & 1456 & 1 & $\W(E_8) \leq \mathrm{Aut}$ & regular \\
$E_8$ & 6 & 60480 & 81950400 & 2710 & 2710 & 1 & $\W(E_8) \leq \mathrm{Aut}$ & regular \\
$E_8$ & 7 & 69120 & 67737600 & 1960 & 1960 & 1 & $\W(E_8) \leq \mathrm{Aut}$ & regular \\
$E_8$ & 8 & 2160 & 302400 & 280 & 280 & 1 & $\W(E_8)$ & $\cong \Gamma(E_8, 2)$ \\
\bottomrule
\end{tabular}
\caption{Parameters of $\Gamma(R, k)$ graphs. CC is number of connected components. $\W(R){:}2$ denotes the Weyl group extended by a diagram automorphism. $\W(R) \leq \mathrm{Aut}$ indicates that $\W(R)$ acts by automorphisms but the full automorphism group has not been determined.}
\label{tab:properties}
\end{table}

Among all graphs $\Gamma(R,k)$ for exceptional $R$, exactly two are disconnected: $\Gamma(E_7, 3)$ with 57 components (including 56 isolated vertices), and $\Gamma(E_7, 7)$ which is edgeless; the latter is explained by Corollary \ref{cor:e7edgeless}. For simply-laced $R$, the graphs $\Gamma(R, 1)$ are 1-skeletons of Gosset polytopes~\cite{Coxeter}: $\Gamma(E_6, 1)$, $\Gamma(E_7, 1)$, and $\Gamma(E_8, 1)$ are the graphs of the polytopes $2_{22}$, $1_{32}$, and $4_{21}$ respectively. The inner product values $\langle \alpha, \beta \rangle \in \{-2, -1, 0, 1, 2\}$ between distinct roots define a 4-class association scheme on $R$, of which $\Gamma(R, 1)$ is the first relation graph. The containment $\W(R) \leq \mathrm{Aut}(\Gamma(R,k))$ is established by Lemma~\ref{lem:weyl}; the automorphism group was determined to be no larger using nauty for all but the four largest cases. Table~\ref{tab:properties} also records two isomorphisms: $\Gamma(E_6, 1) \cong \Gamma(E_6, 4)$ and $\Gamma(E_8, 2) \cong \Gamma(E_8, 8)$. These arise from a scaling relationship between vertex sets, which we now explain.

\begin{proposition}\label{prop:duality}
The following isomorphisms hold, each given by the scaling map $v \mapsto 2v$:
\begin{enumerate}
    \item $\mathcal{V}(E_6, 4) = 2 \cdot \mathcal{V}(E_6, 1)$, giving $\Gamma(E_6, 1) \cong \Gamma(E_6, 4)$.
    \item $\mathcal{V}(E_8, 8) = 2 \cdot \mathcal{V}(E_8, 2)$, giving $\Gamma(E_8, 2) \cong \Gamma(E_8, 8)$.
\end{enumerate}
\end{proposition}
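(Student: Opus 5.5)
We first reduce both isomorphism claims to the stated set equalities. If $\mathcal{V}(R,k') = 2\cdot\mathcal{V}(R,k)$, then the map $\phi(v)=2v$ is a bijection $\mathcal{V}(R,k)\to\mathcal{V}(R,k')$. Moreover, $\phi(v_1)-\phi(v_2)=2(v_1-v_2)$ lies in $2\cdot\mathcal{V}(R,k)=\mathcal{V}(R,k')$ if and only if $v_1-v_2\in\mathcal{V}(R,k)$. So $\phi$ preserves both adjacency and non-adjacency, and the whole content lies in the two vertex-set equalities. For each equality we prove the two inclusions separately, using Lemma~\ref{lem:weyl} (Weyl equivariance of the vertex sets) to reduce to orbit representatives.

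\textbf{Inclusion $2\cdot\mathcal{V}(R,k)\subseteq\mathcal{V}(R,k')$.} Note that $\mathcal{V}(E_6,1)=E_6$ is a single $\W(E_6)$-orbit, and $\mathcal{V}(E_8,2)$ is the single orbit of sums of two orthogonal roots, by transitivity of $\W(E_8)$ on orthogonal pairs. By Lemma~\ref{lem:weyl} and linearity, it therefore suffices to exhibit one preimage each. For $E_6$, the SOS $\{e_2\pm e_3,\ e_4\pm e_5\}$ has sum $2(e_2+e_4)$, and $e_2+e_4\in E_6$. For $E_8$, the SOS $\{e_1\pm e_2,\dots,e_7\pm e_8\}$ has sum $2(e_1+e_3+e_5+e_7)$, and $e_1+e_3+e_5+e_7=(e_1+e_3)+(e_5+e_7)$ is a sum of two orthogonal roots. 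Since $\W(R)$ maps $2\cdot\mathcal{V}(R,k)$ to itself and preserves $\mathcal{V}(R,k')$, this inclusion follows.

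\textbf{Reverse inclusion.} We must show that every sum $v$ of a maximal SOS is twice an element of the smaller vertex set. For $E_8$, take $v=\sum_{i=1}^8\alpha_i$. The $\alpha_i$ form an orthogonal basis with $\|\alpha_i\|^2=2$, so $x=\tfrac12\sum\langle x,\alpha_i\rangle\alpha_i$ for every $x$. Pairing $v/2$ against any root $\beta$ gives $\langle v/2,\beta\rangle=\tfrac12\sum_i\langle\alpha_i,\beta\rangle$. By the row analysis in the proof of Lemma~\ref{lem:mod8} (applied to the row of $C$ indexed by $\beta$, completed to a frame, or more directly because $\sum_i\langle\alpha_i,\beta\rangle^2=4$ forces an even sum), this is an integer. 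Since $E_8$ is unimodular, $v/2$ lies in the $E_8$ lattice, and $\|v/2\|^2=16/4=4$. Every norm-4 vector of the $E_8$ lattice is a sum of two orthogonal roots: norm-4 vectors form a single Weyl orbit of size 2160, and the forward inclusion already exhibits one of them as such a sum. Hence $v/2\in\mathcal{V}(E_8,2)$.

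For $E_6$ the SOS has only 4 roots in rank 6, so the frame argument does not apply directly, and this is the step we expect to be the main obstacle. Our first approach is structural: the roots of $E_6$ orthogonal to a root $\alpha$ form an $A_5$, and a 4-element SOS must contain a pair whose span meets a $D_4$ subsystem. More concretely, we show that $\W(E_6)$ acts transitively on 4-element SOS. The count is $|\mathcal{V}(E_6,4)|=72$, and each such SOS spans a $D_4$ of the form $\{\pm e_i\pm e_j\}$, inside which a maximal SOS is $\{a\pm b,\ c\pm d\}$ up to $\W(D_4)$. Transitivity then reduces the reverse inclusion to the single representative already computed. If transitivity is hard to prove by hand, we fall back on a direct norm and integrality argument. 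Here $\|v\|^2=8$, and pairing $v/2$ with roots gives integers, by the same evenness argument using that any root $\beta$ has $\sum_i\langle\alpha_i,\beta\rangle^2\le 4$ with parity constraints. This places $v/2$ in the weight lattice with norm 2. We must then check that norm-2 vectors of the $E_6$ weight lattice are exactly roots, which holds since minuscule weights have norm $4/3$. Finally, we compare cardinalities (72 and 2160 from Table~\ref{tab:properties}) to confirm equality.
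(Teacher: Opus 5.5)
\textbf{Verdict: there is a genuine gap in the $E_6$ reverse inclusion $\mathcal{V}(E_6,4)\subseteq 2E_6$.} Everything else is sound: your reduction to the vertex-set equalities, the inclusions $2\cdot\mathcal{V}(R,k)\subseteq\mathcal{V}(R,k')$ for both $E_6$ and $E_8$, and the $E_8$ reverse inclusion. The $E_8$ reverse inclusion takes a genuinely different route from the paper. The paper cites Agaoka--Kaneda for transitivity of $\W(R)$ on maximal SOS and checks one representative. You instead use evenness from $\sum_i\langle\alpha_i,\beta\rangle^2=4$, unimodularity, and the single $\W(E_8)$-orbit of the 2160 norm-4 lattice vectors, which avoids that citation.

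For $E_6$, neither of your two routes is complete.
\begin{itemize}
\item \emph{First route.} You announce that $\W(E_6)$ is transitive on 4-element SOS but do not prove it. The claim that every such SOS spans a $D_4$ ``of the form $\{\pm e_i\pm e_j\}$'' is false in these coordinates. For example, $\{\tfrac12(-1,1,-1,1,1,1,1,1),\ e_2+e_3,\ e_5-e_6,\ \tfrac12(1,1,-1,-1,1,1,-1,-1)\}$ is a 4-element SOS of $E_6$ whose span contains half-integer roots. Even the corrected claim ``spans some $D_4$'' leaves open whether $\W(E_6)$ is transitive on those subsystems, which is the whole point. The count $72$ from Table~\ref{tab:properties} says nothing about transitivity.
\item \emph{Fallback route.} Everything rests on $\sum_{i=1}^4\langle\alpha_i,\beta\rangle$ being even for every root $\beta$. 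The bound $\sum_i\langle\alpha_i,\beta\rangle^2\le 4$ does not give this: it permits exactly one or exactly three entries equal to $\pm1$, which makes the sum odd. Unlike in $E_8$, the four $\alpha_i$ do not span, so nothing forces the sum of squares to equal $4$.
\end{itemize}

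\textbf{Fix via transitivity.} Argue as the paper does for pairs in $E_8$, using genuine $E_6$ structure:
\begin{itemize}
\item $\W(E_6)$ is transitive on roots.
\item The stabiliser of a root $\alpha$ is generated by reflections in the $30$ roots orthogonal to $\alpha$. These form an $A_5$, so the stabiliser is transitive on them.
\item The pointwise stabiliser of an orthogonal pair is $\W(A_3)$, transitive on the $12$ roots orthogonal to both.
\item The roots orthogonal to three mutually orthogonal roots form a single pair $\pm\delta$, exchanged by $s_\delta$.
\end{itemize}
Hence $\W(E_6)$ is transitive on ordered orthogonal $4$-tuples, and your representative $\{e_2\pm e_3,\ e_4\pm e_5\}$ finishes the argument.

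\textbf{Alternative fix via parity.} You can instead prove the parity claim directly. Write $\beta=\beta_W+\beta_\perp$, with $\beta_W$ the projection onto $\mathrm{span}\{\alpha_i\}$, and set $w=s_{\alpha_1}s_{\alpha_2}s_{\alpha_3}s_{\alpha_4}$.
\begin{itemize}
\item If exactly three entries are $\pm1$, then $\|\beta_W\|^2=\tfrac32$ and $\|\beta_\perp\|^2=\tfrac12$, so $\langle\beta,w\beta\rangle=-1$. Hence $\beta+w\beta=2\beta_\perp$ is a root orthogonal to every $\alpha_i$. That gives five orthogonal roots in $E_6$, a contradiction.
\item If exactly one entry, say $\langle\beta,\alpha_1\rangle$, is $\pm1$, then $\alpha_2,\alpha_3,\alpha_4$ are orthogonal to the $A_2$ spanned by $\alpha_1$ and $\beta$. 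The roots of $E_6$ orthogonal to an $A_2$ form $2A_2$, which contains no three mutually orthogonal roots, a contradiction.
\end{itemize}
With parity established, your weight-lattice step is correct, since norms in the nontrivial cosets of the root lattice are $\equiv 4/3 \pmod 2$. The final comparison of cardinalities with the table is unnecessary once both inclusions are proved.
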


\begin{proof}
In each case, once the bijection between vertex sets is established, the map $v \mapsto 2v$ is automatically a graph isomorphism: $v_1 - v_2 \in \mathcal{V}$ if and only if $2(v_1 - v_2) \in 2\mathcal{V}$. 

Let $R$ be a simply-laced root system with maximum SOS size $m$, and let $S = \{\alpha_1, \ldots, \alpha_m\}$ be a maximal SOS. Set $\gamma = \frac{1}{2}\sum_{i=1}^m \alpha_i$. Since the $\alpha_i$ are pairwise orthogonal with $\|\alpha_i\|^2 = 2$, we have $\|\gamma\|^2 = m/2$ and $\langle \gamma, \alpha_i \rangle = 1$ for each $i$.

By \cite{Agaoka}, $\W(R)$ acts transitively on maximal SOS in any irreducible root system. It therefore suffices to verify the claim for a single representative maximal SOS; surjectivity then follows from the transitivity of $\W(R)$ on the relevant target vertex set.

\textbf{Case $E_6$ ($m = 4$):} Here $\|\gamma\|^2 = 2$; we show $\gamma$ is always an $E_6$ root. Take $S_0 = \{e_2 + e_3,\, e_2 - e_3,\, e_4 + e_5,\, e_4 - e_5\}$. Then $\gamma_0 = e_2 + e_4$, which is an $E_6$ root. Since $\W(E_6)$ acts transitively on maximal SOS, every centre is of the form $w\gamma_0$ for some $w \in \W(E_6)$. But $w$ permutes the roots of $E_6$, so every centre is a root: $\mathcal{V}(E_6, 4) \subseteq 2\mathcal{V}(E_6, 1)$. For the reverse inclusion, $\W(E_6)$ acts transitively on roots, so every root $\alpha$ has the form $\alpha = w\gamma_0$; then $2\alpha$ is the sum of the maximal SOS $wS_0$, giving $2\alpha \in \mathcal{V}(E_6, 4)$.

\textbf{Case $E_8$ ($m = 8$):} Here $\|\gamma\|^2 = 4$, so $\gamma$ is not a root. Take $S_0 = \{e_1 \pm e_2,\, e_3 \pm e_4,\, e_5 \pm e_6,\, e_7 \pm e_8\}$. The centre is $\gamma_0 = e_1 + e_3 + e_5 + e_7 = (e_1 + e_3) + (e_5 + e_7)$. Since $e_1 + e_3$ and $e_5 + e_7$ are strongly orthogonal $E_8$ roots, $\gamma_0 \in \mathcal{V}(E_8, 2)$. Transitivity on maximal SOS gives $\mathcal{V}(E_8, 8) \subseteq 2\mathcal{V}(E_8, 2)$. For surjectivity, we show that $\W(E_8)$ acts transitively on $\mathcal{V}(E_8, 2)$. Since $E_8$ is simply laced, $\W(E_8)$ acts transitively on roots; the stabiliser of a root $\alpha$ has order $|\W(E_8)|/|E_8| = 696{,}729{,}600/240 = 2{,}903{,}040 = |\W(E_7)|$, so $\mathrm{Stab}(\alpha) = \W(E_7)$, which acts transitively on the roots of $E_8$ orthogonal to $\alpha$. It follows that $\W(E_8)$ acts transitively on ordered pairs of strongly orthogonal roots, so every element of $\mathcal{V}(E_8, 2)$ is a Weyl translate of $\gamma_0$, and hence a centre. Thus $2\mathcal{V}(E_8, 2) \subseteq \mathcal{V}(E_8, 8)$.
\end{proof}

We conclude this section with a tabulation of maximal clique sizes for the graphs $\Gamma(R, k)$ with $R$ a root system of exceptional type.

\begin{table}[h]
\centering
\begin{tabular}{lcccccccc}
\toprule
System & $k=1$ & $k=2$ & $k=3$ & $k=4$ & $k=5$ & $k=6$ & $k=7$ & $k=8$ \\
\midrule
$G_2$ & 3 & 2 & & & & & & \\
$F_4$ & 7 & 3 & 3 & 3 & & & & \\
$E_6$ & 5 & 3 & 5 & 5 & & & & \\
$E_7$ & 7 & 6 & 5 & 7 & 5 & 4 & 1 & \\
$E_8$ & 8 & 8 & 8 & 8 & 8 & 8 & 8 & 8 \\
\bottomrule
\end{tabular}
\caption{Clique numbers of $\Gamma(R, k)$.}
\label{tab:clique}
\end{table}

\section{Erd\H{o}s--Ko--Rado Properties}\label{sec:ekr}

The classical EKR theorem can be stated graph-theoretically: the \emph{Kneser graph} $K(n,k)$ has vertices $\binom{[n]}{k}$ with edges between \emph{disjoint} pairs; its complement (the \emph{intersection graph}) has edges between \emph{intersecting} pairs. Cliques in the intersection graph are intersecting families, and EKR bounds the clique number.

With Gashi, the authors explored EKR properties of the type $A_\ell$ root system. Roots are vectors in $\mathbb{R}^{\ell + 1}$ with two non-zero entries, one positive and one negative; roots are strongly orthogonal if and only if they are disjoint. The graphs $\Gamma(A_\ell, k)$ encode when sums of matchings differ by another such sum. A \emph{sunflower} is a collection of vertices labelled by vectors where every pair shares the same common part. That is: when the vectors are tabulated, each column must contain only non-zero entries (the \textit{core}) or at most one non-zero entry (the \textit{petals}). The main result of the previous work was an EKR type theorem, showing that all cliques of maximum size are sunflowers when $\ell$ is much larger than $k$.

\begin{theorem}[\cite{BGO}]\label{BGOmain}
For $\ell > k \cdot 4^k$, every maximal clique in $\Gamma(A_\ell, k)$ is a sunflower.
\end{theorem}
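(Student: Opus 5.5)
The plan is to work in the type $A$ model. Roots of $A_\ell$ are $e_a - e_b$, strong orthogonality is disjointness, and a vertex of $\Gamma(A_\ell,k)$ is the sum of a $k$-matching on $[\ell+1]$. Such a vertex is a vector with $k$ entries $+1$ and $k$ entries $-1$, all on disjoint coordinates. Distinct matchings can give the same vector, so I identify a vertex with the pair $(P,N)$ of disjoint $k$-sets on which it is $+1$ and $-1$. The first step is to translate adjacency. We have $v_1 - v_2 \in \mathcal{V}(A_\ell,k)$ if and only if $v_1 - v_2$ has exactly $k$ entries $+1$, $k$ entries $-1$, and no entries $\pm 2$. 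Working coordinate by coordinate, this says the following. Write $c_1 = |P_1 \cap P_2| + |N_1 \cap N_2|$ for the coordinates where the two vertices cancel. Then adjacency forces $P_1 \cap N_2 = N_1 \cap P_2 = \emptyset$, and the surviving support has size $4k - 2c_1 = 2k$. Hence $c_1 = k$: adjacent vertices agree, with sign, on exactly half of their $2k$-element supports. So $\Gamma(A_\ell,k)$ is a signed analogue of a ``$t$-intersecting'' graph with fixed intersection size $k$ inside $2k$-sets.

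The second step is to reduce to a uniform intersection problem. Encode each vertex as the $2k$-subset $\hat v = P \times\{+\} \cup N \times \{-\}$ of the doubled ground set $[\ell+1]\times\{\pm\}$. Adjacency implies $|\hat v_1 \cap \hat v_2| = k$ exactly. A clique is therefore a family of $2k$-sets with all pairwise intersections of size exactly $k$, which is a Fisher-type equal-intersection family. The classical Deza theorem (Deza 1974) states that a family of $m$-sets with all pairwise intersections of a fixed size $\lambda$, and with more than $m^2 - m + 1$ members, is a sunflower (a $\Delta$-system). Here $m = 2k$. So any clique with more than $4k^2 - 2k + 1$ vertices is a sunflower with a core $K$ of size $k$, and distinct vertices then have disjoint petals. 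A sunflower is automatically a clique provided the other adjacency conditions hold, and these are implied by disjoint petals together with the common signed core.

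The third step, which I expect to be the main obstacle, is to handle \emph{small} maximal cliques. Deza only controls cliques that are large, but the statement concerns every maximal clique. For this I use the hypothesis $\ell > k\cdot 4^k$. Take a maximal clique $\mathcal{C}$ that is not a sunflower; by the previous step $|\mathcal{C}| \le 4k^2$. Its total support $U = \bigcup_{v \in\mathcal{C}} \operatorname{supp}(v)$ then has size at most $2k\cdot 4k^2$. More carefully, I would bound $|U|$ by iteratively analysing the possible ``core patterns'' and aim for a bound of about $k 4^k$. The aim is to construct a new vertex $u$ adjacent to all of $\mathcal{C}$ and not in $\mathcal{C}$, which contradicts maximality. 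To build $u$, choose a $k$-subset $T$ of signed coordinates such that $|T\cap \hat v| = k$ is impossible... Instead, I must pick $u$ to agree with every $\hat v$ in exactly $k$ signed positions and put the rest on fresh coordinates outside $U$; fresh coordinates exist because $\ell+1 > |U| + k$. The combinatorial task is then to find a signed $k$-set $T\subseteq \hat U$ with $|T \cap \hat v| = k$, that is $T\subseteq \hat v$, for all $v$. This is only possible if the $\hat v$ share a common $k$-set, which is exactly the sunflower case. So for a non-sunflower clique, this extension argument alone fails, and the real work is different. I would show instead that a non-sunflower clique of pairwise-$k$-intersecting $2k$-sets has a rigid structure: it is contained in a bounded-size design-like configuration on at most $k4^k$ points, such as the $2^{2k}$-type ``half-cube'' configurations. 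I would then exhibit an extension inside that configuration, or show that such configurations force every vertex to be adjacent to some other outside vertex, using the freedom given by large $\ell$. The enumeration of these possible kernels, via a recursive argument on the core of pairs that costs a factor of $4$ per level, is where the bound $k\cdot 4^k$ should come from, and it is the step I expect to be hardest.
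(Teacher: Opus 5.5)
The paper gives no proof of this theorem; it quotes it from the earlier work, and the paragraph just before it describes that result as being about cliques of \emph{maximum} size. Judged on its own, your Steps 1 and 2 are sound. Adjacency is exactly $P_1\cap N_2=N_1\cap P_2=\emptyset$ together with $|\hat v_1\cap\hat v_2|=k$. Deza's theorem then makes every clique with more than $4k^2-2k+1$ vertices a signed sunflower with a $k$-element core. This is also a sunflower of supports, since adjacent vertices never use a common coordinate with opposite signs.

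The gap is Step 3. It contains no argument, and it cannot be completed, because for inclusion-maximal cliques the statement is false. Take $k=2$ and $v_1=e_3-e_4+e_5-e_6$, $v_2=e_1-e_2+e_5-e_6$, $v_3=e_1-e_2+e_3-e_4$. Each difference is a sum of two disjoint roots, so these form a triangle. Its pairwise support intersections $\{5,6\}$, $\{3,4\}$, $\{1,2\}$ are distinct. A clique containing a non-sunflower subfamily is not a sunflower, and the triangle lies in some maximal clique. So $\Gamma(A_\ell,2)$ has non-sunflower maximal cliques for every $\ell\ge5$, in particular for $\ell>32$.

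Your own Step 2 shows the ``extend to contradict maximality'' strategy is impossible in principle. If every non-sunflower clique could be enlarged, iterating would produce a non-sunflower clique with more than $4k^2-2k+1$ vertices. Your extension set-up is also mistaken. A new vertex may meet the old support in anywhere from $k$ to $2k$ signed coordinates, not exactly $k$. For instance, $e_1+e_3-e_6-e_7$ is adjacent to all three $v_i$ and uses three old coordinates.

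For the maximum-clique version, Step 2 already nearly finishes the proof, and no analysis of small cliques is needed. Fix a signed core $K$ with $a$ plus and $k-a$ minus signs. Attach petals on pairwise disjoint $k$-sets of coordinates outside $\mathrm{supp}(K)$, each carrying $k-a$ plus and $a$ minus signs. This is a sunflower clique of size $\lfloor(\ell+1-k)/k\rfloor$. So the clique number $\omega$ satisfies $\omega\ge 4k^2-2k+2$ once $\ell\ge 4k^3-2k^2+3k-1$. The hypothesis $\ell>k\cdot4^k$ guarantees this for all $k\ge1$, since $k\cdot 4^k\ge 4k^3$. Every maximum clique then exceeds Deza's threshold and is a sunflower. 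The recursive ``core pattern'' analysis you propose as the source of $k\cdot 4^k$ is unnecessary; this route needs a weaker hypothesis than the one stated.
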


This definition can be applied to root systems of exceptional type without difficulty. 

\begin{definition}\label{def:sunflower}
For $v \in \mathbb{R}^n$, the support is $\mathrm{supp}(v) = \{i : v_i \neq 0\}$. A clique $\{v_1, \ldots, v_p\}$ in $\Gamma(R,k)$ is a \emph{sunflower} if there exists a set $C$ such that $\mathrm{supp}(v_i) \cap \mathrm{supp}(v_j) = C$ for all $i \neq j$.
\end{definition}

Our convention is that the core must be non-empty; though the petals might be empty: a sunflower might consist of vectors with no zero entries. 

\begin{example}[Non-sunflower clique in $\Gamma(E_8, 3)$]\label{ex:sunflower}
In $\Gamma(E_8, 3)$, vertices are sums of three pairwise orthogonal roots. Consider:
\begin{align*}
v_1 &= (\phantom{-}1, -1, \phantom{-}1, \phantom{-}0, -1, -1, \phantom{-}1, \phantom{-}0), & \mathrm{supp}(v_1) &= \{1,2,3,5,6,7\}, \\
v_2 &= (-1, -1, \phantom{-}1, -1, \phantom{-}0, -1, \phantom{-}1, \phantom{-}0), & \mathrm{supp}(v_2) &= \{1,2,3,4,6,7\}, \\
v_3 &= (\phantom{-}1, \phantom{-}0, \phantom{-}1, -1, \phantom{-}1, -1, \phantom{-}1, \phantom{-}0), & \mathrm{supp}(v_3) &= \{1,3,4,5,6,7\}.
\end{align*}
These form a clique in $\Gamma(E_8, 3)$: each difference (e.g.\ $v_1 - v_2 = (2, 0, 0, 1, -1, 0, 0, 0)$) is a vertex. The pairwise support intersections are distinct, $\mathrm{supp}(v_1) \cap \mathrm{supp}(v_2) = \{1,2,3,6,7\}$ and $\mathrm{supp}(v_1) \cap \mathrm{supp}(v_3) = \{1,3,5,6,7\},$ so this is not a sunflower. 
\end{example}

\subsection{Sunflower Proportions}

We computed all maximum cliques and classified them by the sunflower property.\footnote{Alternative definitions were also tested. A value-based definition (requiring constant values on core coordinates) and a sign-based definition (requiring constant signs) both yield strictly lower sunflower proportions. For $E_7$, $k=3$: a support-based definition gives 3.2\% of maximum cliques satisfying the sunflower property, while neither the value-based nor sign-based definitions give any sunflowers at all.} The sunflower property is coordinate-dependent: a Weyl group element $w$ maps cliques to cliques but may change coordinate supports, so $w$ does not in general preserve whether a clique is a sunflower. However, the subgroup of coordinate \emph{permutation} matrices acts by relabelling supports, preserving all pairwise intersections, and hence the sunflower property. This observation reduces the sunflower count to a manageable computation.

\begin{lemma}\label{lem:sf-perm}
Let $\Gamma$ be a graph on which a group $G$ acts by automorphisms, with maximum cliques of size $\omega$. Let $P \leq G$ be the subgroup of coordinate permutation matrices, and write $\mathrm{sf}(v)$ for the number of maximum sunflower cliques containing~$v$. If $\mathcal{O}_1, \ldots, \mathcal{O}_r$ are the $P$-orbits on $V(\Gamma)$ with representatives $v_1, \ldots, v_r$, the total number of maximum sunflower cliques is
\[
\frac{1}{\omega} \sum_{i=1}^{r} |\mathcal{O}_i| \cdot \mathrm{sf}(v_i)\,.
\]
\end{lemma}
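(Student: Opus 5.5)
The plan is a double count of incidences between vertices and maximum sunflower cliques, with the $P$-invariance of $\mathrm{sf}$ used to collapse the vertex sum to a sum over orbit representatives. Let $\mathcal{S}$ be the set of maximum cliques of $\Gamma$ that are sunflowers in the sense of Definition~\ref{def:sunflower}, and consider the incidence set $I = \{(v, C) : C \in \mathcal{S},\ v \in C\}$. Every $C \in \mathcal{S}$ has exactly $\omega$ vertices, so $|I| = \omega |\mathcal{S}|$. Counting by the first coordinate instead gives $|I| = \sum_{v \in V(\Gamma)} \mathrm{sf}(v)$. It will therefore suffice to show that $\mathrm{sf}$ is constant on each $P$-orbit. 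Then the vertex sum splits over the partition $\mathcal{O}_1, \ldots, \mathcal{O}_r$ as $\sum_i |\mathcal{O}_i|\,\mathrm{sf}(v_i)$, and dividing by $\omega$ gives the formula.

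The key step is to show that each $\pi \in P$ maps $\mathcal{S}$ bijectively to itself. Since $P \le G$ acts by automorphisms, $\pi$ sends cliques to cliques of the same size, so it permutes the maximum cliques. Write $\pi$ as the coordinate permutation induced by a permutation $\sigma$ of indices, so that $\mathrm{supp}(\pi v) = \sigma(\mathrm{supp}(v))$. Because $\sigma$ is a bijection on index sets, it commutes with intersections:
\[
\mathrm{supp}(\pi v_i) \cap \mathrm{supp}(\pi v_j) = \sigma\bigl(\mathrm{supp}(v_i) \cap \mathrm{supp}(v_j)\bigr).
\]
Hence if all pairwise support intersections of $C$ equal a core $K$, then all pairwise intersections of $\pi C$ equal $\sigma(K)$. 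The core stays non-empty, so the paper's convention is respected. Applying the same argument to $\pi^{-1}$ shows that $\pi$ restricts to a bijection $\mathcal{S} \to \mathcal{S}$.

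Consequently, $C \mapsto \pi C$ is a bijection from the sunflower cliques containing $v$ onto those containing $\pi v$, so $\mathrm{sf}(\pi v) = \mathrm{sf}(v)$ for all $v$ and all $\pi \in P$. This is exactly the orbit-constancy needed to finish.

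The main obstacle is care rather than difficulty. A general $w \in \W(R)$ does \emph{not} preserve $\mathcal{S}$, as the paper notes before the statement. So the argument must use only $P$, and the orbits must be $P$-orbits, not $G$-orbits. I would also check explicitly that a permutation matrix in $G$ transports supports by $\sigma$ and intersections accordingly. In particular, I would not use the transitivity arguments of Lemma~\ref{lem:nbr-count} or Remark~\ref{rem:two-orbit}, which rely on the full Weyl group.
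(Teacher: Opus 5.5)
Your proposal is correct and follows the same route as the paper. You show that each coordinate permutation in $P$ preserves the sunflower property; you do this via $\mathrm{supp}(\pi v_i)\cap\mathrm{supp}(\pi v_j)=\sigma(\mathrm{supp}(v_i)\cap\mathrm{supp}(v_j))$, where the paper argues with core and petal columns. From this you deduce that $\mathrm{sf}$ is constant on $P$-orbits and double count vertex--clique incidences, and you state explicitly that the incidences are with maximum \emph{sunflower} cliques, which the paper's wording leaves implicit.
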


\begin{proof}
Consider the matrix with $\omega$ rows formed from the vertex labels of a maximum clique. A \textit{core column} has all entries non-zero; and a \textit{petal column} has one non-zero entry. In a sunflower, all non-zero columns are either core columns or petal columns. Since $\sigma \in P$ acts by permuting columns, it maps core columns to core columns; petal columns to petal columns and hence sunflowers to sunflowers. If $v_2 = \sigma v_1$ with $\sigma \in P \leq G$, then $\sigma$ is an automorphism of $\Gamma$ mapping the cliques through $v_1$ bijectively to those through~$v_2$, preserving the sunflower property, so $\mathrm{sf}(v_1) = \mathrm{sf}(v_2)$.

Count the pairs $(v, C)$ where $C$ is a maximum clique and $v \in C$ in two ways. First: by summing over vertices, it is $\sum_{v} \cdot \mathrm{sf}(v)$; since these counts are constant on $P$-orbits, this is $\sum_{i=1}^{r} |\mathcal{O}_i| \cdot \mathrm{sf}(v_i)$. Second: if the number of maximum cliques is $\mathcal{C}$, the count is $\omega \mathcal{C}$. Dividing both sides by $\omega$ gives the result. 
\end{proof}

For the total number of maximum cliques, Lemma~\ref{lem:nbr-count} applies directly in the vertex-transitive case; for the three non-regular instances $(E_7, 3)$, $(E_7, 4)$, and $(E_8, 4)$ that each split into two Weyl orbits, the formula of Remark~\ref{rem:two-orbit} is used. For the root systems in the $E_8$ embedding, the permutation subgroup of the Weyl group has an explicit description. In $E_8$, every coordinate permutation is a Weyl group element---the transposition $(i,j)$ arises as the reflection in $e_i - e_j$---giving $P \cong S_8$. For $E_7$ with constraint $x_1 + x_8 = 0$, the permutations preserving this constraint are $S_2 \times S_6$, where $S_2$ swaps coordinates $1$ and $8$ while $S_6$ permutes $\{2, \ldots, 7\}$; this gives $r = 7$ orbits on $\mathcal{V}(E_7, 1)$ and between $16$ and $53$ orbits for higher~$k$. For $E_6$ with constraints $x_1 + x_7 = 0$ and $x_1 + x_8 = 0$, the permutation subgroup is $S_5$, permuting $\{2, \ldots, 6\}$.

\begin{table}[ht]
\centering
\begin{minipage}{0.48\textwidth}
\centering
\begin{tabular}{llrrr}
\toprule
System & $k$ & Cliques & Sunfl. & \% \\
\midrule
$G_2$ & 1 & 20 & 0 & 0.0 \\
$G_2$ & 2 & 6 & 6 & 100.0 \\
\midrule
$F_4$ & 1 & 24 & 0 & 0.0 \\
$F_4$ & 2 & 1,152 & 192 & 16.7 \\
$F_4$ & 3 & 4,992 & 896 & 17.9 \\
$F_4$ & 4 & 96 & 64 & 66.7 \\
\midrule
$E_7$ & 1 & 576 & 0 & 0.0 \\
$E_7$ & 2 & 120,960 & 0 & 0.0 \\
$E_7$ & 3 & 483,840 & 15,360 & 3.2 \\
$E_7$ & 4 & 1,021,824 & 104,448 & 10.2 \\
$E_7$ & 5 & 7,547,904 & 119,808 & 1.6 \\
$E_7$ & 6 & 4,838,400 & 122,880 & 2.5 \\
\bottomrule
\end{tabular}
\end{minipage}\hfill
\begin{minipage}{0.48\textwidth}
\centering
\begin{tabular}{llrrr}
\toprule
System & $k$ & Cliques & Sunfl. & \% \\
\midrule
\midrule
$E_6$ & 1 & 432 & 32 & 7.4 \\
$E_6$ & 2 & 4,320 & 0 & 0.0 \\
$E_6$ & 3 & 17,280 & 1,280 & 7.4 \\
$E_6$ & 4 & 432 & 32 & 7.4 \\

\midrule
$E_8$ & 1 & 17,280 & 128 & 0.7 \\
$E_8$ & 2 & 4,665,600 & 30,720 & 0.7 \\
$E_8$ & 3 & 38,707,200 & 286,720 & 0.7 \\
$E_8$ & 4 & 635,316,480 & 4,705,536 & 0.7 \\
$E_8$ & 5 & 679,311,360 & 5,031,936 & 0.7 \\
$E_8$ & 6 & 10,450,944,000 & 68,812,800 & 0.7 \\
$E_8$ & 7 & 1,194,393,600 & 8,847,360 & 0.7 \\
$E_8$ & 8 & 4,665,600 & 30,720 & 0.7 \\
\bottomrule
\end{tabular}
\end{minipage}
\caption{Maximum cliques and sunflower proportions across exceptional root systems. Counts are of cliques of maximum size~$\omega$.}
\label{tab:sunflower}
\end{table}

The $E_8$ and $E_7$ clique counts were computed via Lemma~\ref{lem:nbr-count} using \texttt{cliquer} \cite{cliquer} to enumerate cliques in $\Gamma[N(v)]$; sunflower counts for $E_7$ and $E_8$ used Lemma~\ref{lem:sf-perm}. For non-regular cases ($(E_7,3)$, $(E_7,4)$, $(E_8,4)$) the per-orbit weighted formula was used. All maximum cliques in $\Gamma(E_8, k)$ have size $s = 8$; for $E_7$, mixed maximal clique sizes occur. $\Gamma(F_4, 1)$ also has 336 maximal cliques of size~5 (the clique number is $\omega = 7$); of these, 16 are sunflowers. Likewise, for $E_7$, the maximal clique sizes are not uniform: e.g.\ $\Gamma(E_7,4)$ has 1,021,824 cliques of maximum size~7 but also 3,870,720 maximal cliques of size~6. All counts in the table refer to cliques of maximum size only. The $E_7$ sunflower counts were independently verified by direct enumeration for $k \leq 3$ and by the permutation class method of Lemma~\ref{lem:sf-perm} for all~$k$.

The sunflower property (Definition~\ref{def:sunflower}) depends on coordinate supports, and hence on the choice of basis. All results in Table~\ref{tab:sunflower} use the coordinates described in Section~\ref{sec:rootsystems}: the standard orthonormal coordinates for $G_2$ and $F_4$, and the 8-dimensional $E_8$ embedding for $E_6$, $E_7$, and $E_8$. For $E_6$, one could instead use the embedding in $\mathbb{R}^{6}$ spanned by linear combinations of the rows of the Cartan matrix (see \cite{Humphreys}), which changes the supports and hence the sunflower classification. The graph $\Gamma(E_6, k)$ and its clique structure are basis-independent; only the sunflower classification changes.

\begin{table}[h]
\centering
\begin{tabular}{lcccc}
\toprule
Basis & $k=1$ & $k=2$ & $k=3$ & $k=4$ \\
\midrule
$E_8$ embedding (8D) & 7.4\% & 0.0\% & 7.4\% & 7.4\% \\
Cartan basis (6D) & 0.0\% & 0.7\% & 0.0\% & 0.0\% \\
\bottomrule
\end{tabular}
\caption{Sunflower proportions for $E_6$ in two different coordinate systems. }
\label{tab:basis_dependence}
\end{table}

\section{Open Questions and Concluding Remarks}\label{sec:conclusion}
Generalising work on SOS cliques in the type $A_{\ell}$ root systems led us to define, construct and analyse a series of graphs related to the root systems of exceptional type. Our main theoretical results relate properties of these graphs to properties of the underlying root systems; unexpected isomorphisms between graphs are explained by the geometry of the underlying SOS cliques. 

In the simply-laced exceptional types $E_6$, $E_7$, and $E_8$, sunflower cliques are rare, comprising at most 11\% of maximum cliques, compared to 100\% in type $A$ for large rank. For the non-simply-laced types $G_2$ and $F_4$, sunflower proportions are higher and more variable. This suggests that the condition $\ell \gg k$ in Theorem \ref{BGOmain} is likely to be necessary; it remains open to determine the precise behaviour of sunflower and non-sunflower cliques in the $A_\ell$ system when $k$ is proportional to $\ell$.

Our work suggests three questions. 

\begin{question}
For which root systems $R$ does an isomorphism $\Gamma(R_1, k) \cong \Gamma(R_2, m)$ hold? Proposition~\ref{prop:duality} establishes graph isomorphisms $\Gamma(E_6, 1) \cong \Gamma(E_6, 4)$ and $\Gamma(E_8, 2) \cong \Gamma(E_8, 8)$ via the scaling map $v \mapsto 2v$; and Example \ref{ex:f4k4} shows that $\Gamma(F_4, 4) \cong \Gamma(D_4, 1)$.  
\end{question}

\begin{question}
Is there a representation-theoretic interpretation of $\Gamma(R,k)$? Strongly orthogonal roots appear in the construction of discrete series representations and in Kostant's cascade construction, \cite{HarishChandra, KostantCascade}. Do the graph theoretic properties of $\Gamma(R,k)$ have a representation-theoretic interpretation?
\end{question}

\begin{question}
Does a result analogous to Theorem \ref{BGOmain} hold for types $B_\ell$, $C_\ell$, $D_\ell$? If so, what is the optimal relationship between $\ell$ and $k$ under which the EKR property holds? 
\end{question}

\end{document}